\definecolor{mblue}{rgb}{0,0,.8}
\newcommand{\N}{\mathbb N}
\newcommand{\Z}{\mathbb Z}
\newcommand{\F}{\mathbb F}
\newcommand{\abcd}[4]{\left(\begin{smallmatrix}#1&#2\\#3&#4\end{smallmatrix}\right)}
\newtheorem{thm}{Theorem}
\newtheorem{lem}{Lemma}
\newtheorem{prop}{Proposition}
\newtheorem{cor}{Corollary}
 \DeclareMathOperator{\SL}{SL}  
   \DeclareMathOperator{\PSL}{PSL}
\def\dash---{\thinspace---\hskip.16667em\relax}
\begin{document}

\title{Lifts of projective congruence groups, II}
\author{Ian Kiming}
\address{Department of Mathematics, University of Copenhagen, Universitetsparken 5, DK-2100 Copenhagen \O ,
Denmark.}
\email{\href{mailto:kiming@math.ku.dk}{kiming@math.ku.dk}}

\begin{abstract} We continue and complete our previous paper `Lifts of projective congruence groups' \cite{noncongr} concerning the question of whether there exist noncongruence subgroups of $\SL_2(\Z)$ that are projectively equivalent to one of the groups $\Gamma_0(N)$ or $\Gamma_1(N)$. A complete answer to this question is obtained: In case of $\Gamma_0(N)$ such noncongruence subgroups exist precisely if $N\not\in \{ 3,4,8\}$ and we additionally have either that $4\mid N$ or that $N$ is divisible by an odd prime congruent to $3$ modulo $4$. In case of $\Gamma_1(N)$ these noncongruence subgroups exist precisely if $N>4$.

As in our previous paper the main motivation for this question is the fact that the above noncongruence subgroups represent a fairly accessible and explicitly constructible reservoir of examples of noncongruence subgroups of $\SL_2(\Z)$ that can serve as basis for experimentation with modular forms on noncongruence subgroups.
\end{abstract}

\maketitle
\section{Introduction.}\label{intro} Let $N\in\N$ and let $\Gamma$ be one of the standard congruence subgroups $\Gamma_0(N)$, $\Gamma_1(N)$, or $\Gamma(N)$. Denote by $\overline\Gamma$ the image of $\Gamma$ in $\PSL_2(\Z)$. For $\Gamma_1$ a subgroup of $\SL_2(\Z)$ (of finite index) we say that $\Gamma_1$ is a {\it lift} of $\overline\Gamma$ if $\Gamma_1$ projects to $\overline\Gamma$ under the canonical homomorphism $\SL_2(\Z) \rightarrow \PSL_2(\Z)$.

In our previous paper \cite{noncongr} we discovered that not only is it possible for a congruence $\overline\Gamma$ as above to have a {\it noncongruence lift}, i.e., a lift $\Gamma_1$ that is not a congruence subgroup, i.e., that does not contain $\Gamma(M)$ for any $M$, but that, in fact, the number of noncongruence lifts appear to usually dominate the number of congruence lifts. Here, `usually' should be taken to mean `apart from the cases where simple obstructions trivially prevent this, and apart from the cases where $N$ is small'. However, a number of hard cases were left out of the analysis in \cite{noncongr} and some of the results of that paper depended on machine computations.

The principal interest in these questions lies in the fact that noncongruence lifts of a group $\overline\Gamma$ provide relatively easy examples of noncongruence groups, and that, because our approach to these noncongruence lifts is constructive there is a possibility of studying spaces of modular forms on such noncongruence lifts. Cf.\ for example the analysis in \cite{noncongr} of spaces of modular forms of weight $3$ on the various lifts of the group $\overline{\Gamma_1(6)}$. As our knowledge of the arithmetic of modular forms on noncongruence subgroups is still fairly limited compared with the situation for congruence subgroups, having a readily accessible reservoir of examples of noncongruence subgroups is valuable as a tool for exploration and experimentation.

The purpose of the present paper is to augment the previous paper \cite{noncongr} so as to obtain a complete description of the situation for the above series of groups and for all $N$. We shall use a slightly different method of proof and are in fact able to prove everything from the ground up and also avoid all machine computations. Our results are as follows.

\begin{thm}\label{thm_congruence_lifts} Let $N\in\N$ and write $N = 2^s p_1^{s_1} \cdots p_t^{s_t}$ where the $p_i$ are distinct primes, $s_i\in\N$, and $s\in\Z_{\ge 0}$.

Then the number of congruence lifts of the subgroup $\overline{\Gamma_0(N)} \le \PSL_2(\Z)$ is
$$
\left\{ \begin{array}{ll} 1 ~, & \mbox{if $s\le 1$ and $p_i\equiv 1\pod{4}$ for all $i$} \\ 1+2^{s+t} ~, & \mbox{if $s\le 1$ and $p_i\equiv 3\pod{4}$ for at least one $i$} \\ 1+2^{\min \{ 3,s\} +t} ~, & \mbox{if $s\ge 2$} ~.\end{array} \right.
$$

The number of congruence lifts of the subgroup $\overline{\Gamma_1(N)} \le \PSL_2(\Z)$ is
$$
\left\{ \begin{array}{ll} 1 ~, & \mbox{if $N\le 2$} \\ 3 ~, & \mbox{if $N>2$ is odd} \\ 5 ~, & \mbox{if $N>2$ is even} ~.\end{array} \right.
$$

The number of congruence lifts of the subgroup $\overline{\Gamma(N)} \le \PSL_2(\Z)$ is
$$
\left\{ \begin{array}{ll} 1 ~, & \mbox{if $N=1$} \\ 3 ~, & \mbox{if $N>1$ is odd} \\ 5 ~, & \mbox{if $N=2$} \\ 9 ~, & \mbox{if $N>2$ is even} ~.\end{array} \right.
$$
\end{thm}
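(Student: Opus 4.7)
The plan is to reduce the count of congruence lifts to an enumeration of quadratic characters of a certain finite quotient of $\tilde\Gamma = \{\pm I\}\Gamma$, subject to a constraint at $-I$. Any lift of $\overline\Gamma$ is either $\tilde\Gamma$ itself or a subgroup $\Gamma_1\le\tilde\Gamma$ of index $2$ with $-I\notin\Gamma_1$; the latter correspond bijectively to surjections $\phi:\tilde\Gamma\to\{\pm 1\}$ with $\phi(-I)=-1$. Such a lift is a congruence subgroup precisely when $\phi$ factors through $\tilde\Gamma/\Gamma(M)$ for some $M$. Since only the primes in $\{2,3\}$ and those dividing $N$ can contribute (as $\SL_2(\Z/p^k)$ is perfect for $p\ge 5$), the set of congruence $\pm 1$-characters of $\tilde\Gamma$ is a finite $\F_2$-vector space, already realized at a single $M_0$ depending only on $N$ (a convenient choice for $\Gamma_0(N)$ being $M_0 = 24N$). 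Writing $G_N := \tilde\Gamma/\Gamma(M_0)$, the count of congruence lifts equals $1 + A(\Gamma)$, with $A(\Gamma)$ the number of $\phi:G_N\to\{\pm 1\}$ satisfying $\phi(-I)=-1$.

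To compute $A(\Gamma)$ I would exhibit a generating list of explicit congruence $\pm 1$-characters: the quadratic Dirichlet characters $\gamma\mapsto\chi(d)$ of modulus dividing $N$; the ``sign'' character obtained by composing $\Gamma_0(N)\to\SL_2(\F_2)\cong S_3\to\{\pm 1\}$ (the decisive extra character when $N$ is odd); its analogs at higher $2$-power levels coming from $\Gamma_0(N)\to\SL_2(\Z/2^k)$ for $k=2,3$; and, in the cases $\Gamma_1(N)$ and $\Gamma(N)$, additional characters involving $b\pmod 2$ or $a\pmod 4$. Using explicit generators of $\tilde\Gamma$, one would then check the independence of this list modulo relations and compute $\phi(-I)$ in each case. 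For instance, when $N = p\equiv 3\pmod 4$, one obtains two odd congruence characters---the Legendre character $\chi_p$ and its product with the sign character---matching the predicted $2 = 2^{0+1}$ non-trivial lifts.

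The main obstacle will be \emph{completeness}: proving that the explicit list exhausts all quadratic congruence characters, equivalently, determining $\dim_{\F_2}H^1(G_N,\F_2)$ together with the class of $-I$ therein. The Chinese Remainder decomposition of $G_N$ reduces this to a prime-by-prime analysis of $\SL_2(\Z/p^k)$-abelianizations: odd primes $p_i$ dividing $N$ each contribute a single $\F_2$-factor from $(\Z/p^{s_i})^*$, while the prime $2$ contributes $\min\{s,3\}$ or $s$ copies of $\F_2$ according to $s\ge 2$ or $s\le 1$, reflecting both $(\Z/2^s)^*$ and the sign-type characters. The condition ``$s\le 1$ with all $p_i\equiv 1\pmod 4$'' is exactly that $-I$ lies in the congruence commutator subgroup of $\Gamma_0(N)$, which forces every congruence $\pm 1$-character to kill $-I$ and leaves only the trivial lift. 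For $\Gamma_1(N)$ and $\Gamma(N)$ the count is $N$-independent because $-I\notin\Gamma$ for $N>2$, giving the bounded constants $3$, $5$, $9$.
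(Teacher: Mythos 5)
Your reduction is, modulo duality, the one the paper uses: the paper counts congruence lifts as the subgroups $U$ of a fixed finite quotient $G$ with $\langle U,-I\rangle=G$, i.e.\ codimension-one subspaces of the $\F_2$-space $G/G'G^2$ avoiding the class of $-I$, which is exactly the kernel-side description of your characters $\phi$ with $\phi(-I)=-1$; the subsequent CRT decomposition and prime-by-prime computation of quadratic characters is also the paper's route (Proposition \ref{prop_congruence_lifts_general}, Proposition \ref{prop_factor_groups}, Corollary \ref{cor_factor_groups}), and the local answers you state ($t$ Legendre-type characters from the odd primes, $\min\{s,3\}$ resp.\ $s$ independent characters at the prime $2$, and the criterion ``$s\le 1$ and all $p_i\equiv 1\pmod 4$'' for $-I$ to die) are all correct.

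The genuine gap is your assertion that every congruence quadratic character of $\widetilde\Gamma$ already factors through the single level $M_0=24N$. Perfectness of $\SL_2(\Z/p^k)$ for $p\ge 5$ disposes of the primes away from $6N$, and for an odd prime $p\mid N$ the kernel $\Gamma(p^{s_i})/\Gamma(p^m)$ is a $p$-group, so raising the exponent there adds no quadratic characters. But at $p=2$ the kernel $\Gamma(2^{s+3})/\Gamma(2^m)$ is a $2$-group, so nothing in your argument prevents a quadratic character from being nontrivial on it; a priori the space of congruence $\pm1$-characters could keep growing with the $2$-adic level, and then no finite $M_0$ you can compute with is justified. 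This is precisely where the paper invokes Wohlfahrt's theorem: the general level of a lift depends only on its projective image, so every congruence lift of $\overline\Gamma$ has general level equal to that of $\Gamma$ (namely $N$ for $\Gamma_0(N)$, $\Gamma_1(N)$, $\Gamma(N)$) and hence contains $\Gamma(2N)$; this pins the whole computation to the finite group $\widetilde\Gamma/\Gamma(2N)$. You must either cite this or prove directly the $2$-adic stabilization $\Gamma(2^{s+1})\subseteq \Gamma_0(2^s)'\,\Gamma_0(2^s)^2\,\Gamma(2^m)$ for all $m$, which is not easier. Beyond that, what you flag as the ``main obstacle'' (computing $\dim_{\F_2}$ of each local abelianization mod squares and locating $-I$) is indeed where most of the work lies\dash---the paper does it with explicit generators of the $2$-power quotients\dash---and your proposal states the right answers but leaves all of it to be carried out.
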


\begin{thm}\label{thm_lifts_gamma0(N)} Let $N\in\N$. All lifts of $\overline{\Gamma_0(N)} \le \PSL_2(\Z)$ are congruence subgroups of $\SL_2(\Z)$ if and only if either $N\in\{ 3,4,8\}$ or if $4\nmid N$ and all odd prime divisors of $N$ are congruent to $1$ modulo $4$.
\end{thm}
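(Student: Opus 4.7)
The plan is to parametrise the lifts of $\overline{\Gamma_0(N)}$ by characters of $\Gamma_0(N)$ and compare the total with the congruence-lift count $C(N)$ supplied by Theorem \ref{thm_congruence_lifts}. Since $-I\in\Gamma_0(N)$, a lift is either $\Gamma_0(N)$ itself (when $-I$ is retained) or an index-$2$ subgroup not containing $-I$; the latter are in bijection with homomorphisms $\chi\colon\Gamma_0(N)\to\{\pm 1\}$ satisfying $\chi(-I)=-1$. Writing $L(N)$ for the number of such characters, the theorem reduces to proving that $L(N)=C(N)-1$ precisely when $N\in\{3,4,8\}$ or $4\nmid N$ and every odd prime divisor of $N$ is congruent to $1$ modulo $4$.

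The decisive observation is that a character $\chi$ with $\chi(-I)=-1$ exists if and only if the image of $-I$ in the $\F_2$-abelianisation $\Gamma_0(N)^{\mathrm{ab}}\otimes\F_2$ is nonzero, in which case the number of such characters equals $\tfrac{1}{2}|\mathrm{Hom}(\Gamma_0(N),\F_2)|$. Now $\Gamma_0(N)$ contains an element of order $4$ --- equivalently, $\overline{\Gamma_0(N)}$ has an elliptic point of order $2$ --- precisely when $4\nmid N$ and every odd prime divisor of $N$ is $\equiv 1\pmod 4$, by the standard formula for the number of order-$2$ elliptic points on $X_0(N)$. In this situation the square of such an order-$4$ element equals $-I$, so $-I\in 2\Gamma_0(N)^{\mathrm{ab}}$ and hence vanishes modulo $2$; combined with $C(N)=1$ from Theorem \ref{thm_congruence_lifts}, this gives $L(N)=0=C(N)-1$, i.e., all lifts are congruence.

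In the opposite direction, suppose $N\notin\{3,4,8\}$ and either $4\mid N$ or some odd prime divisor of $N$ is $\equiv 3\pmod 4$. Then $\Gamma_0(N)$ has no element of order $4$, and a careful analysis of the abelianisation shows that $-I$ is nontrivial in $\Gamma_0(N)^{\mathrm{ab}}\otimes\F_2$, whence $L(N)=\tfrac{1}{2}|\mathrm{Hom}(\Gamma_0(N),\F_2)|$. To conclude one must verify the strict inequality $L(N)>C(N)-1$. A lower bound on the $\F_2$-rank of $\Gamma_0(N)^{\mathrm{ab}}$ in terms of the genus $g$ and number of cusps $c$ of $X_0(N)$ --- extracted for instance from a free-product-with-amalgamation presentation of $\Gamma_0(N)$ obtained via $\SL_2(\Z)\cong\Z/4\ast_{\Z/2}\Z/6$ and coset enumeration --- should supply this inequality, with the three levels $3,4,8$ appearing as precisely the degenerate cases where $g=0$ and $c$ is too small to beat $C(N)-1$.

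The central difficulty will be the quantitative step: matching the $\F_2$-rank of $\Gamma_0(N)^{\mathrm{ab}}$ against $C(N)-1$ uniformly in $N$. This requires not only a lower bound on that rank, but also an identification of all the congruence $\F_2$-characters of $\Gamma_0(N)$, which are not exhausted by the quadratic Dirichlet characters modulo $N$ coming from $\Gamma_0(N)/\Gamma_1(N)\cong(\Z/N)^{\times}$, so additional contributions (such as those coming from the mod-$2$ sign character and from higher composite moduli) have to be accounted for in order to match exactly the count $C(N)$ given by Theorem \ref{thm_congruence_lifts}.
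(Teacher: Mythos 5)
Your framework is essentially the paper's: since $-I\in\Gamma_0(N)$, the lifts other than $\Gamma_0(N)$ itself are the index-$2$ subgroups not containing $-I$, and one wins by comparing their number against the congruence count from Theorem \ref{thm_congruence_lifts}. Your treatment of the case $4\nmid N$ with all odd prime divisors $\equiv 1\pmod 4$ is also correct and is exactly the argument the paper imports: an elliptic element of order $2$ in $\overline{\Gamma_0(N)}$ lifts to an element of order $4$ squaring to $-I$, so $-I$ dies in $\Gamma_0(N)^{\mathrm{ab}}\otimes\F_2$, every lift contains $-I$, and the unique lift is $\Gamma_0(N)$.

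The problem is that everything after that is a plan rather than a proof, and the part you defer is the entire content of the theorem. Concretely: (1) the claim that $-I$ is nonzero in $\Gamma_0(N)^{\mathrm{ab}}\otimes\F_2$ whenever there is no order-$2$ elliptic element is asserted via ``a careful analysis shows''; it does hold, but only because (by Kulkarni's presentation, say) $\overline{\Gamma_0(N)}$ is then a free product of copies of $\Z/3$ and a free group, which splits $\Gamma_0(N)\cong\{\pm I\}\times\overline{\Gamma_0(N)}$ --- you never exhibit such a presentation. (2) The decisive inequality $L(N)>C(N)-1$ is never established; you say a genus/cusp bound ``should supply'' it and that $3,4,8$ ``appear as precisely the degenerate cases,'' which is the conclusion restated, not derived. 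Note also that $C(N)$ is not bounded independently of $N$ (it grows like $2^{\min\{3,s\}+t}$), so a crude lower bound on the free rank does not settle small $N$ uniformly; the paper avoids a uniform comparison by first invoking the monotonicity lemma (a noncongruence lift of $\overline{\Gamma_1}$ forces one for any $\overline{\Gamma_2}$ with $\Gamma_2\subseteq\Gamma_1$) together with the known result for primes $p\equiv 3\pmod 4$, reducing to the explicit cases $N\in\{4,6,8,9,16\}$ and the families $N=3M$ or $4M$ with $M>1$ a product of primes $\equiv 1\pmod 4$, where $e_2$, $e_3$ and the free rank $r$ are computed exactly. Your final paragraph also suggests you would re-derive the congruence character count rather than simply read off $C(N)$ from Theorem \ref{thm_congruence_lifts}; that is unnecessary work and a sign the comparison step has not actually been thought through. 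As it stands the proposal proves one implication in one of the two arithmetic regimes and leaves the rest open.
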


\begin{thm}\label{thm_lifts_gamma1(N)} Let $N\in\N$. All lifts of $\overline{\Gamma_1(N)} \le \PSL_2(\Z)$ are congruence subgroups of $\SL_2(\Z)$ if and only if $N\le 4$.
\end{thm}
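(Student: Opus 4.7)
The strategy is to compute, for every $N$, the total number of lifts of $\overline{\Gamma_1(N)}$ in $\SL_2(\Z)$ and to compare it with the number of congruence lifts supplied by Theorem~\ref{thm_congruence_lifts}. A noncongruence lift exists precisely when these two numbers differ, so the proof reduces to a pair of counting statements.

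The first step is to parametrise all lifts. Let $\tilde\Gamma$ denote the full preimage of $\overline{\Gamma_1(N)}$ in $\SL_2(\Z)$. Any lift $L$ either contains $-I$, in which case $L=\tilde\Gamma$, or it does not, in which case $L$ is the kernel of some $\epsilon\colon\tilde\Gamma\to\{\pm 1\}$ satisfying $\epsilon(-I)=-1$. For $N\le 2$ we have $\tilde\Gamma=\Gamma_1(N)$, and $-I$ is the square of an order-$4$ element of $\Gamma_1(N)$ (for instance $S=\abcd{0}{-1}{1}{0}$ for $N=1$ and $\abcd{1}{-1}{2}{-1}$ for $N=2$), so $\epsilon(-I)=1$ for any such $\epsilon$; no nontrivial section exists, and the only lift is the congruence group $\Gamma_1(N)$ itself. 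For $N\ge 3$ we have $-I\notin\Gamma_1(N)$ and $\tilde\Gamma=\{\pm I\}\times\Gamma_1(N)$ as an internal direct product; the condition $\epsilon(-I)=-1$ then merely fixes the restriction to the $\{\pm I\}$-factor, so the total number of lifts of $\overline{\Gamma_1(N)}$ equals
$$
1+|\operatorname{Hom}(\Gamma_1(N),\{\pm 1\})|.
$$

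The next step is to compute this Hom group from the Fuchsian structure of $\Gamma_1(N)$. For $N\ge 4$ the group $\Gamma_1(N)$ is torsion-free, hence a free group of rank $r_N=2g_N+c_N-1$, where $g_N$ is the genus and $c_N$ the number of cusps of $X_1(N)$; in particular $|\operatorname{Hom}(\Gamma_1(N),\{\pm 1\})|=2^{r_N}$. For $N=3$ the group has signature $(0;3;2)$, so $\Gamma_1(3)\cong \Z/3\ast\Z$ and $|\operatorname{Hom}(\Gamma_1(3),\{\pm 1\})|=2$. For the ``only if'' direction the totals $1+2=3$ at $N=3$ and, since $g_4=0$, $c_4=3$, $r_4=2$, the total $1+4=5$ at $N=4$ match exactly the $3$ and $5$ congruence lifts supplied by Theorem~\ref{thm_congruence_lifts}, so every lift is congruence when $N\le 4$.

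For the converse I would use the standard cusp formula $c_N=\tfrac12\sum_{d\mid N}\phi(d)\phi(N/d)$ (valid for $N\ge 5$) and the Riemann--Hurwitz formula for $g_N$ to verify the bound $r_N\ge 3$ for every $N\ge 5$, the tight cases being $N=5,6$ where $g_N=0$, $c_N=4$, $r_N=3$. Consequently the total lift count $1+2^{r_N}\ge 9$ strictly exceeds the at-most-$5$ congruence lifts provided by Theorem~\ref{thm_congruence_lifts}, so noncongruence lifts exist for all $N\ge 5$. The main work is the uniform bound $r_N\ge 3$ for $N\ge 5$: this is routine arithmetic with the cusp and genus formulas, but must be checked uniformly and in particular at the tight cases $N\in\{5,6\}$, where $r_N=3$ yet still $1+2^3=9>5$, so the argument succeeds.
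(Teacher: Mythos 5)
Your proof is correct and follows essentially the same route as the paper: count the total number of lifts as $1+2^{r}$ using the fact that $\Gamma_1(N)\cong\overline{\Gamma_1(N)}$ is free of rank $r=1+d/6=2g_N+c_N-1$ for $N\ge 4$, and compare with the congruence count of Theorem~\ref{thm_congruence_lifts}. The only cosmetic differences are that you establish the uniform bound $r_N\ge 3$ for $N\ge 5$ directly from the index formula (which is indeed routine, the minimum $d=12$ being attained at $N=5,6$), whereas the paper reduces to the base cases $N\in\{6,8,9,p\}$ via Lemma~\ref{lem:subgroups}, and that you dispose of $N\le 3$ by exhibiting $-I$ as a square (for $N\le 2$) and using the free-product structure $\Z/3\ast\Z$ (for $N=3$) rather than invoking $\overline{\Gamma_1(N)}=\overline{\Gamma_0(N)}$ together with Theorem~\ref{thm_lifts_gamma0(N)}.
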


Being complete classifications of the situation regarding the question of existence of noncongruence lifts of $\overline\Gamma$ when $\Gamma$ is either $\Gamma_0(N)$ or $\Gamma_1(N)$, the above theorems obviously deal with some of the remaining, hard cases that were left undecided in our previous paper \cite{noncongr}. These hard cases contain for instance all cases of $\Gamma_1(N)$ when $N$ is $4$ times an odd number $>1$, as well as the cases concerning $\Gamma_0(N)$ when $N$ is $4$ times an odd number $>1$ all of whose prime divisors are congruent to $1$ modulo $4$.

Below we shall prove the above theorems by a somewhat different, and more general method than in \cite{noncongr} although the basic principles of proof remain the same. Specifically, we shall utilise the paper \cite{kulkarni} for information about generators and relations for the group $\overline\Gamma$ in the various cases. Combined with the group theoretical analysis below in section \ref{congruence_lifts}, which leads to a proof of Theorem \ref{thm_congruence_lifts}, this allows us to prove Theorems \ref{thm_lifts_gamma0(N)} and \ref{thm_lifts_gamma1(N)} basically {\it ab initio}, i.e., essentially without referring back to the results of \cite{noncongr} (this is apart from certain elementary observations). In particular, this new approach bypasses the need for computation that was necessary in certain cases in the paper \cite{noncongr}.

The paper \cite{noncongr} already proved that there are noncongruence lifts of the group $\overline{\Gamma(N)}$ if (and only if) $N>2$. It would have been easy to include the proof of this fact here by using our current approach, but we have chosen not to do so.

\section{Congruence lifts}\label{congruence_lifts} Theorem \ref{thm_congruence_lifts} follows readily by combining Proposition \ref{prop_congruence_lifts_general} and Corollary \ref{cor_factor_groups} below.
\smallskip

For a finite group $G$ denote as usual by $G'$ the derived group of $G$, and by $G^2$ the characteristic subgroup of $G$ generated by all squares of elements of $G$. The subgroup $G'G^2$ is then the largest normal subgroup of $G$ with elementary abelian quotient of $2$-power order. (One has in fact $G'G^2 = G^2$, since all commutators can be expressed as products of squares. Also, if $G$ is a $2$-group, $G'G^2$ is the Frattini subgroup of $G$.  However, we will not need this information).

Let us recall Wohlfahrt's notion of the `general level' of a subgroup $\Gamma$ of finite index in $\SL_2(\Z)$, cf.\ \cite{wohlfahrt}: the general level of $\Gamma$ is defined to be the least common multiple of all cusp widths where these widths are computed relative to the projective image $\overline\Gamma \le \PSL_2(\Z)$. I.e., the width of a cusp $c$ is the least $n\in\N$ such that
$$
\pm g \abcd{1}{n}{0}{1} g^{-1} \in \Gamma
$$
where $g\in\SL_2(\Z)$ is such that $g\infty = c$. Thus, the general level of $\Gamma$ depends only on its projective image $\overline\Gamma$.

\begin{prop}\label{prop_congruence_lifts_general} Let $\Gamma$ be a congruence subgroup of $\SL_2(\Z)$ of general level $N$.

Then $\Gamma \ge \Gamma(2N)$. We put $G := \Gamma / \Gamma(2N)$ and consider the quotient $G/G'G^2$ as a vector space over $\F_2$. Define:
$$
d : = \dim_{\F_2} G/G'G^2 ~.
$$

Abusing notation and denoting by $-1$ both the matrix $\abcd{-1}{0}{0}{-1} \in \SL_2(\Z)$ as well as its image in $G$, the number of congruence lifts of $\overline\Gamma$ is:
$$
\left\{ \begin{array}{ll} 1~,& \mbox{if $-1\in G'G^2$} \\ 1 + 2^{d-1} ~,& \mbox{if $-1\in\Gamma$ but $-1\not\in G'G^2$} \\
1 + 2^d ~,& \mbox{if $-1\not\in\Gamma$}~.
\end{array} \right.
$$
\end{prop}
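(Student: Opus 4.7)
My plan is to count congruence lifts by reducing to an $\F_2$-linear algebra problem inside the finite group $\tilde G := (\pm\Gamma)/\Gamma(2N)$, where $\pm\Gamma = \Gamma\cdot\{\pm I\}$ is the full preimage of $\overline\Gamma$ in $\SL_2(\Z)$.

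First I would verify that every congruence lift of $\overline\Gamma$ automatically contains $\Gamma(2N)$. The general level depends only on the projective image, so any lift $\Gamma_1$ of $\overline\Gamma$ has general level $N$, and Wohlfahrt's theorem \cite{wohlfahrt} then gives $\Gamma_1\supseteq\Gamma(N)\supseteq\Gamma(2N)$; applied to $\Gamma$ itself this also proves the first assertion of the proposition. Writing $\bar\epsilon$ for the image of $-I$ in $\tilde G$, I would observe that $-I\not\in\Gamma(2N)$ (since $2N\ge 2$), so $\bar\epsilon\ne 1$. Since a lift of $\overline\Gamma$ is by definition a subgroup of $\pm\Gamma$ whose projective image equals $\overline\Gamma$, the congruence lifts correspond bijectively to the subgroups $H\le\tilde G$ satisfying $H\cdot\langle\bar\epsilon\rangle = \tilde G$. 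Such an $H$ is either all of $\tilde G$ or else has index $2$ in $\tilde G$ and does not contain $\bar\epsilon$. Index-$2$ subgroups are kernels of nonzero homomorphisms $\phi:\tilde G\to\F_2$, which factor through $\tilde G/\tilde G'\tilde G^2$, and the avoidance condition translates into $\phi(\bar\epsilon) = 1$. So the count is $1$ plus the number of $\phi\in\mathrm{Hom}(\tilde G,\F_2)$ with $\phi(\bar\epsilon)=1$ (these are automatically nonzero).

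I would finish by splitting into cases. If $-I\in\Gamma$ then $\tilde G = G$. When $-I\in G'G^2$ every $\phi$ kills the class of $-I$, so no index-$2$ subgroup avoids $\bar\epsilon$ and the total is $1$; when $-I\not\in G'G^2$ the class of $-I$ is a nonzero vector in the $d$-dimensional $\F_2$-space $G/G'G^2$, so the functionals sending it to $1$ form an affine hyperplane of size $2^{d-1}$, yielding $1+2^{d-1}$ lifts. If instead $-I\not\in\Gamma$, then $\bar\epsilon$ is central in $\tilde G$ and intersects $G$ trivially, so $\tilde G \cong G\times\langle\bar\epsilon\rangle$ and $\tilde G/\tilde G'\tilde G^2 \cong G/G'G^2\oplus\F_2$; a homomorphism $\phi$ with $\phi(\bar\epsilon)=1$ is then freely specified by its restriction to $G/G'G^2$, giving $2^d$ choices and the total $1+2^d$. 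The principal subtlety I expect is in the first step: invoking Wohlfahrt correctly for a congruence subgroup regardless of whether it contains $-I$, and recognising that the extra factor of $2$ in $\Gamma(2N)$ is precisely what guarantees $\bar\epsilon\ne 1$ in $\tilde G$ even in small-level cases where $-I$ would already lie in $\Gamma(N)$; the remainder is routine bookkeeping in the Frattini-type quotient $\tilde G/\tilde G'\tilde G^2$.
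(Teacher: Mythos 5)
Your overall strategy is essentially the paper's: the paper treats the case $-1\in\Gamma$ by identifying congruence lifts with subspaces of $G/G'G^2$ that together with the class of $-1$ span everything, counts the codimension-one ones avoiding that class, and then reduces the case $-1\notin\Gamma$ to the previous one by passing to $\langle\Gamma,-1\rangle$ \dash--- exactly your $\tilde G$. Your count of functionals $\phi$ with $\phi(\bar\epsilon)=1$ is the dual formulation of the same computation, and the three answers come out identically.

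There is, however, one genuine error, and it sits precisely at the point you yourself flag as the main subtlety. You justify $\Gamma_1\supseteq\Gamma(2N)$ by first claiming that Wohlfahrt's theorem gives $\Gamma_1\supseteq\Gamma(N)$. That intermediate claim is false for congruence subgroups of $\SL_2(\Z)$ when the level is defined projectively, as it is here: since the cusp width is the least $n$ with $\pm gT^ng^{-1}\in\Gamma_1$, a congruence subgroup of general level $N$ may contain $-gT^ng^{-1}$ without containing $gT^ng^{-1}$, and then it cannot contain $\Gamma(N)$. A concrete instance: the nontrivial character $\epsilon\colon\Gamma_1(5)\to\{\pm1\}$ factoring through $\Gamma_1(5)/\Gamma(10)\cong\SL_2(\F_2)\times\Z/5$ is nontrivial on $\Gamma(5)$ (which surjects onto $\SL_2(\F_2)$), so the twisted lift $\{\epsilon(A)A : A\in\Gamma_1(5)\}$ is a congruence subgroup of general level $5$ containing $\Gamma(10)$ but not $\Gamma(5)$. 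The correct statement \dash--- and the one the paper actually invokes, namely Proposition 3 of \cite{noncongr} \dash--- is exactly $\Gamma_1\supseteq\Gamma(2N)$; the factor $2$ is there to absorb the sign ambiguity in the definition of cusp width, not merely (as you suggest) to guarantee $\bar\epsilon\neq1$ in $\tilde G$. Relatedly, your parenthetical ``$-I\notin\Gamma(2N)$ since $2N\ge 2$'' fails for $N=1$, where $-I\in\Gamma(2)$; this is harmless because in the one place you need $\langle\bar\epsilon\rangle\cap G=1$ you are already assuming $-I\notin\Gamma\supseteq\Gamma(2N)$, but it should be argued that way. Once the containment $\Gamma_1\supseteq\Gamma(2N)$ is cited in its correct form, the rest of your argument is complete.
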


Before the proof, we need the following simple lemma from linear algebra.

\begin{lem}\label{lemma_vector_spaces} Let $p$ be a prime, let $d\in\N$ and let $V$ be a vector space of dimension $d$ over $\F_p$.
\smallskip

If $0\neq v\in V$ is a given nonzero vector then the number of subspaces of $V$ of codimension $1$ and not containing $v$ is $p^{d-1}$.
\end{lem}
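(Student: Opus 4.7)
The plan is to parametrise codimension-$1$ subspaces of $V$ by nonzero linear functionals and then count those functionals which do not vanish on $v$.

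First I would recall that any codimension-$1$ subspace $H \le V$ is the kernel of some nonzero linear functional $\phi \in V^*$, and that two functionals $\phi_1,\phi_2 \in V^*\setminus\{0\}$ have the same kernel iff $\phi_2 = \lambda \phi_1$ for some $\lambda\in\F_p^*$. Thus the hyperplanes of $V$ are in bijection with the set $(V^*\setminus\{0\})/\F_p^*$. Under this bijection, the condition $v\not\in H = \ker\phi$ translates into $\phi(v) \neq 0$.

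Next I would consider the evaluation map $\mathrm{ev}_v : V^* \to \F_p$ sending $\phi \mapsto \phi(v)$. Since $v\neq 0$, this map is $\F_p$-linear and surjective (extend $v$ to a basis and take the dual basis), so its kernel has dimension $d-1$, i.e., $p^{d-1}$ elements. Hence the number of functionals $\phi \in V^*$ with $\phi(v)\neq 0$ equals $p^d - p^{d-1} = p^{d-1}(p-1)$. None of these functionals are zero, and the scalar action of $\F_p^*$ preserves the condition $\phi(v)\neq 0$ and acts freely on this set.

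Finally I would divide by $p-1$ to pass from nonzero functionals to hyperplanes, obtaining $p^{d-1}(p-1)/(p-1) = p^{d-1}$, which is the desired count. There is no real obstacle here; the argument is entirely routine linear algebra and the only thing to be careful about is the free $\F_p^*$-action, which is immediate because a nonzero functional cannot be fixed by any nontrivial scalar.
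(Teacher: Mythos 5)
Your proof is correct and rests on the same underlying idea as the paper's: parametrising the codimension-$1$ subspaces by nonzero linear functionals modulo scalars. The only difference is organisational \dash--- the paper counts all hyperplanes and subtracts those containing $v$ (identifying the latter with hyperplanes of $V/\langle v \rangle$), whereas you count the functionals with $\phi(v)\neq 0$ directly via the surjective evaluation map and then divide by the free action of $\F_p^{\times}$; both routes give $p^{d-1}$.
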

\begin{proof} The statement is obviously true when $d=1$, so we proceed under the assumption that $d>1$.

The subspaces of $V$ of codimension $1$ are in $1-1$ correspondence with surjective linear maps $V\rightarrow \F_p$ modulo scalars $\neq 0$. Hence the number $A$ of such subspaces is:
$$
A = \frac{p^d-1}{p-1} = p^{d-1} + \ldots + 1
$$
as the cardinality of the dual vector space $V^{\ast}$ is $\# V^{\ast} = \# V = p^d$.
\smallskip

Now let a nonzero vector $v\in V$ be given. The number $B$ of codimension $1$ subspaces of $V$ not containing $v$ is $B=A-C$ where $C$ is the number of codimension $1$ subspaces $W$ of $V$ with $v\in W$.

Clearly, $C$ is the number of surjective linear maps $V\rightarrow \F_p$ with $v$ in the kernel, counted modulo scalars $\neq 0$. Again, this can be identified as the number of surjective linear maps $V/\langle v \rangle \rightarrow \F_p$ modulo scalars $\neq 0$. As $V/\langle v \rangle$ has dimension $d-1$, it follows from the first part of the proof that
$$
C = \frac{p^{d-1}-1}{p-1}
$$
But then:
$$
B = A - C = p^{d-1} ~.
$$
\end{proof}

\begin{proof}[Proof of Proposition \ref{prop_congruence_lifts_general}] As $\Gamma$ is a congruence subgroup of $\SL_2(\Z)$ of general level $N$, we know by Proposition $3$ of \cite{noncongr} that $\Gamma$ contains $\Gamma(2N)$. (This result is essentially due to Wohlfahrt \cite{wohlfahrt}, cf.\ the discussion after Proposition $1$ of \cite{noncongr}). Furthermore, if $\Gamma_1$ is a congruence lift of $\overline\Gamma$ then $\Gamma_1$ also has general level $N$ and hence also contains $\Gamma(2N)$. Also, for any lift $\Gamma_1$ of $\overline\Gamma$ we must necessarily have $\langle \Gamma_1 , -1 \rangle = \langle \Gamma , -1 \rangle$ since $\langle -1 \rangle$ is the kernel of the projection map $\SL_2(\Z) \rightarrow \PSL_2(\Z)$.
\smallskip

Suppose first that $-1\in\Gamma$. It then follows from the above that congruence lifts of $\overline\Gamma$ correspond one-to-one to subgroups $U$ of $G := \Gamma / \Gamma(2N)$ such that $G$ is generated by $U$ and the image of $-1$. Such a subgroup $U$ is then necessarily of index $\le 2$ in $G$ and hence is normal in $G$ and contains $G'G^2$. Thus, these subgroups $U$ are in one-to-one correspondence with subspaces $W$ of the $\F_2$-vector space $V := G/G'G^2$ such that $V$ is generated by $W$ and the image $v$ of $-1$. If $v=0$, i.e., if $-1\in G'G^2$ there is only one such subspace, namely $V$ itself. On the other hand, if $-1\not\in G'G^2$ then $v\neq 0$, and apart from $V$ itself the possibilities for the subspaces $W$ are the codimension $1$ subspaces of $V$ not containing $v$. By lemma \ref{lemma_vector_spaces} the number of the latter subspaces is $2^{d-1}$ where $d := \dim _{\F_2} G/G'G^2$; observe that, because of the condition $-1\not\in G'G^2$, we certainly have $d\ge 1$ in this case.

We have now established the proposition in case $-1\in\Gamma$.
\smallskip

Suppose then that $-1\not\in\Gamma$ and consider the group $\widetilde\Gamma := \langle \Gamma , -1 \rangle$. Then $\widetilde\Gamma$ is a congruence subgroup with image $\overline\Gamma$ in $\PSL_2(\Z)$ and hence also has general level $N$. As now $-1\in\widetilde\Gamma$, and as congruence lifts of $\overline\Gamma$ and of $\overline{\widetilde\Gamma}$ are trivially the same things, we can now apply the previous discussion to $\widetilde\Gamma$. So, we consider the group $\widetilde G := \widetilde\Gamma / \Gamma(2N)$ and the dimension $\tilde d := \dim_{\F_2} \widetilde G / \widetilde G' \widetilde G^2$. Clearly, $\widetilde G \cong \langle -1 \rangle \times G$ whence we see that $\tilde d = d+1$. As $\widetilde G' \widetilde G^2 = G'G^2$, the hypothesis $-1\not\in\Gamma$ implies $-1\not\in \widetilde G' \widetilde G^2$. Hence the previous discussion implies the statement of the proposition in the present case.
\end{proof}

\begin{prop}\label{prop_factor_groups}
\smallskip

\noindent (i) Let $p$ be an odd prime and let $s\in\N$.

If $G_0:=\Gamma_0(p^s)/\Gamma(p^s)$ then $G_0/(G_0'G_0^2)$ is cyclic of order $2$. Further, $-1\in G_0'G_0^2$ if and only if $p\equiv 1\pod{4}$.

If $G_1:=\Gamma_1(p^s)/\Gamma(p^s)$ then $G_1 = G_1'G_1^2$.

We have $-1\in \Gamma_0(p^s)$, but $-1\not\in \Gamma_1(p^s), \Gamma(p^s)$
\smallskip

\noindent (ii) Let $s\in\Z_{\ge 0}$.

If $G_0 := \Gamma_0(2^s)/\Gamma(2^{s+1})$ then:
$$
G_0/(G_0'G_0^2) \cong \left\{ \begin{array}{ll} (\Z/2)^{s+1} & \mbox{if $s\le 2$} \\ (\Z/2)^4 & \mbox{if $s\ge 3$}. \end{array} \right.
$$

We have $-1\in \Gamma_0(2^s)$, but $-1\in G_0'G_0^2$ if and only if $s \le 1$.

If $G_1 := \Gamma_1(2^s)/\Gamma(2^{s+1})$ then:
$$
G_1/(G_1'G_1^2) \cong \left\{ \begin{array}{ll} \Z/2 & \mbox{if $s=0$} \\ (\Z/2)^2 & \mbox{if $s\ge 1$}. \end{array} \right.
$$

We have $-1\in \Gamma_1(2^s)$ if and only if $-1\in G_1'G_1^2$ if and only if $s\le 1$.

Finally, if $G := \Gamma(2^s)/\Gamma(2^{s+1})$ then:
$$
G/(G'G^2) \cong \left\{ \begin{array}{ll} \Z/2 & \mbox{if $s=0$} \\ (\Z/2)^3 & \mbox{if $s\ge 1$} \end{array} \right.
$$
and we have $-1\in \Gamma(2^s)$ if and only if $s\le 1$, and $-1\in G'G^2$ if and only if $s=0$.
\end{prop}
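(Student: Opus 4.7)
My plan is to realise each quotient explicitly as a subgroup of $\SL_2(\Z/p^s)$ or $\SL_2(\Z/2^{s+1})$ and compute $G/G'G^2$ directly from generators and relations. For part (i), I identify $G_0=\Gamma_0(p^s)/\Gamma(p^s)$ with the upper triangular Borel of $\SL_2(\Z/p^s)$, i.e.\ the semidirect product $(\Z/p^s)\rtimes(\Z/p^s)^*$ where $a$ acts on $b$ by multiplication by $a^2$. Since $p$ is odd, the unipotent factor $\Z/p^s$ has odd order and vanishes in any $\F_2$-elementary abelian quotient, while the cyclic torus $(\Z/p^s)^*$ modulo squares contributes $\Z/2$, giving $G_0/G_0'G_0^2\cong\Z/2$. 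The image of $-1$ is the class of $-1\in(\Z/p^s)^*/((\Z/p^s)^*)^2$, which is trivial iff $-1$ is a quadratic residue modulo $p$, iff $p\equiv 1\pmod 4$. For $G_1$ the group is just the unipotent part $\Z/p^s$ of odd order, so $G_1=G_1'G_1^2$ automatically, and the $-1$ membership statements follow from $-1\not\equiv 1\pmod{p^s}$ for odd $p$.

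For part (ii), the case $s=0$ reduces to $\SL_2(\F_2)\cong S_3$, where $S_3'S_3^2=A_3$ is of index $2$, giving $\Z/2$ for all three groups simultaneously (with $-1=1$ in $\SL_2(\F_2)$). For $s\ge 1$ I first analyse the common kernel $K:=\Gamma(2^s)/\Gamma(2^{s+1})$: writing an element as $I+2^sM$, the identity $\det(I+2^sM)=1+2^s\Tr M+2^{2s}\det M$ combined with $2^{2s}\equiv 0\pmod{2^{s+1}}$ for $s\ge 1$ collapses the determinant condition to $\Tr M\equiv 0\pmod 2$. Hence $K$ is the $3$-dimensional $\F_2$-space of traceless matrices mod $2$; being already elementary abelian, this settles the $\Gamma(2^s)$ case directly.

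For $G_1$ and $G_0$ with $s\ge 1$, I fit $G$ into the short exact sequence $1\to K\to G\to\Gamma/\Gamma(2^s)\to 1$. The quotient $\Gamma_1(2^s)/\Gamma(2^s)$ is cyclic of order $2^s$ generated by $T=\abcd 1101$, while $\Gamma_0(2^s)/\Gamma(2^s)$ is the Borel $(\Z/2^s)\rtimes(\Z/2^s)^*$, generated by $T$ together with diagonal generators of $(\Z/2^s)^*$ (just $-I$ for $s=1,2$, and $-I$ together with $D_5=\abcd 5{0}{0}{5^{-1}}$ for $s\ge 3$, using $(\Z/2^s)^*\cong\Z/2\times\Z/2^{s-2}$). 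Each diagonal $D_a$ conjugates $K$ by multiplying off-diagonal entries by $a^{\pm 2}$, trivial modulo $2$ since $a$ is odd; hence $[G,K]$ comes solely from the $T$-action and equals a $1$-dimensional subspace $\langle I+E_2\rangle$ of $K$ (with $E_2$ the upper-right basis vector). Combined with the square $T^{2^s}=I+2^sE_2\in K$, this forces $K/(K\cap G'G^2)$ to be $1$-dimensional, and the $\F_2$-Frattini quotient of $\Gamma/\Gamma(2^s)$ supplies the remaining contribution.

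The main obstacle is assembling the pieces carefully in the $\Gamma_0$ case and verifying the Frattini rank of $\Gamma_0(2^s)/\Gamma(2^s)$: rank $1$ for $s=1$ (only $T$), rank $2$ for $s=2$ (add $-I$), and rank $3$ for $s\ge 3$ (add $D_5$), the torus contribution $(\Z/2^s)^*/((\Z/2^s)^*)^2$ stabilising at rank $2$ from $s=3$ onwards. Together with the $1$-dimensional contribution from $K$ this yields $(\Z/2)^{s+1}$ for $s\le 2$ and $(\Z/2)^4$ for $s\ge 3$ in the $\Gamma_0$ case, and $(\Z/2)^2$ for $s\ge 1$ in the $\Gamma_1$ case. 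The $-1\in G'G^2$ statements are then read off by locating $-1\pmod{\Gamma(2^{s+1})}$ in the computed $G/G'G^2$: for $s\le 1$, $-1$ (when it lies in $\Gamma$) falls in the same class as $T^{2^s}$ or a commutator and so is in $G'G^2$, whereas for $s\ge 2$ in $\Gamma_0$ the image of $-1$ is the $-I$-direction of the torus Frattini quotient, independent of everything else.
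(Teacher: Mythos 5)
Your proposal is correct and follows essentially the same route as the paper: the same normal series $\Gamma_0(2^s)\unrhd\Gamma_1(2^s)\unrhd\Gamma(2^s)\unrhd\Gamma(2^{s+1})$ (resp.\ $\Gamma_0(p^s)\unrhd\Gamma_1(p^s)\unrhd\Gamma(p^s)$), the same generators (the unipotent $T$, the diagonal torus, and the kernel $K$), and the same commutator computations, with your identification of $\Gamma(2^s)/\Gamma(2^{s+1})$ with the traceless matrices over $\F_2$ being just a coordinate-free restatement of the paper's three explicit generators $\alpha,\beta,\gamma$. The only point requiring care\dash---that $[G,K]$ together with the squares landing in $K$ really exhausts $K\cap G'G^2$, so that $\gamma$ genuinely survives and the quotient is $1$-dimensional rather than $0$-dimensional\dash---is settled, exactly as in the paper, by exhibiting an elementary abelian quotient of the asserted rank, which is what your concluding Frattini-rank count amounts to.
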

\begin{proof} For $N\in\N$ we have in general that $\SL_2(\Z)/\Gamma(N) \stackrel{\sim}{\longrightarrow} \SL_2(\Z/N)$ with the isomorphism induced by reducing matrix entries modulo $N$. Also, we have in $\Gamma_0(N)$ a normal series $\Gamma_0(N) \unrhd \Gamma_1(N) \unrhd \Gamma(N)$ with successive quotients:
$$
\Gamma_0(N)/\Gamma_1(N) \cong (\Z/N)^{\times}
$$
generated by matrices modulo $N$ of the shape $\abcd{a}{0}{0}{b}$ where $a$ and $b$ are integers with $ab\equiv 1\pod{N}$, as well as
$$
\Gamma_1(N)/\Gamma(N) \cong \Z/N
$$
generated by the matrix $\abcd{1}{1}{0}{1}$ modulo $N$.
\medskip

\noindent (i) As $p$ is odd, the group $(\Z/p^s)^{\times}$ is cyclic of order $p^{s-1} (p-1)$. If $a$ is a generator and $b$ is such that $ab\equiv 1\pod{p^s}$, the quotient $\Gamma_0(p^s)/\Gamma_1(p^s)$ is then generated by the image of
$$
\xi := \abcd{a}{0}{0}{b} \pmod{p^s} .
$$
On the other hand, $\Gamma_1(p^s)/\Gamma(p^s)$ is generated modulo $p^s$ by:
$$
\tau := \abcd{1}{1}{0}{1} ~.
$$

Now, since $p$ is odd we see that $\tau$ is a suitable power of $\tau^2$ so that $\langle \xi^2 , \tau^2 \rangle = \langle \xi^2 , \tau \rangle$. But $\langle \xi^2 , \tau \rangle$ is normal in $G_0$ with quotient cyclic of order $2$. It follows that $G_0'G_0^2 = \langle \xi^2 , \tau^2 \rangle = \langle \xi^2 , \tau \rangle$ and hence that $G_0/(G_0'G_0^2)$ is cyclic of order $2$, as claimed. We also see that $-1\in G_0'G_0^2 = \langle \xi^2 , \tau \rangle$ if and only if $-1$ is a square modulo $p$ which happens precisely if $p\equiv 1 \pod{4}$.
\smallskip

As $s\ge 1$ and $p$ is odd, the group $\Gamma_1(p^s)$ does not contain $-1$. Since $G_1$ is cyclic of odd order $p^s$ we have $G_1 = G_1'G_1^2$.
\medskip

\noindent (ii) We will be considering the normal series:
$$
\Gamma_0(2^s) \unrhd \Gamma_1(2^s) \unrhd \Gamma(2^s) \unrhd \Gamma(2^{s+1})
$$
as well as the corresponding normal series in $G_0 = \Gamma_0(2^s)/\Gamma(2^{s+1})$.
\smallskip

If $s=0$ the three groups $G_0$, $G_1$, and $G$ coincide and are isomorphic to $\SL_2(\Z) / \Gamma(2) \cong \SL_2(\F_2) \cong S_3$. It follows that $G/(G'G^2)$ is cyclic of order $2$ in this case. We have $-1\in\Gamma(2)$ and hence the image of this element is trivial in $G$.
\smallskip

We may now assume $s\ge 1$ for the rest of the proof.

Then the quotient $G:=\Gamma(2^s) / \Gamma(2^{s+1})$ has order $8$, and one checks that it is in fact isomorphic to $(\Z/2)^3$ and generated by the following matrices modulo $2^{s+1}$:
$$
\alpha := \abcd{1}{2^s}{0}{1} ~,~~ \beta := \abcd{1+2^s}{2^s}{0}{1+2^s} ~,~~ \gamma := \abcd{1}{0}{2^s}{1} ~.
$$

It follows that $G'G^2 = 1$ and that $-1\not\in G'G^2$.
\smallskip

The quotient $\Gamma_1(2^s) / \Gamma(2^s)$ is generated by the image of the matrix:
$$
\tau := \abcd{1}{1}{0}{1}
$$
so that $\tau^{2^s} = \alpha$ in $G$. One computes that $\tau\alpha = \alpha\tau$, $\tau\beta = \beta\tau$, and that:
$$
\tau\gamma\tau^{-1} = \beta\gamma ~.
$$

We see that $G_1/\langle \tau^2,\beta \rangle$ is isomorphic to $(\Z/2)^2$, generated by the images of $\tau$ and $\gamma$. Hence this quotient is an elementary abelian $2$-group whence $\langle \tau^2,\beta \rangle \ge G_1'G_1^2$. On the other hand, $\beta\in G_1'$ by the above. Hence $G_1'G_1^2 = \langle \tau^2,\beta \rangle$. We have $-1\in \Gamma_1(2^s)$ if and only if $s\le 1$. When $s=1$ we do in fact have $-1\in G_1'G_1^2$ as this element in this case coincides with $\alpha\beta = \tau^2 \beta$.
\smallskip

It now remains to deal with the group $G_0$ for $s\ge 1$.

Assume first that in fact $s\ge 3$. Then $\Gamma_0(2^s) / \Gamma_1(2^s) \cong (\Z/2^s)^{\times} \cong \Z/2 \times \Z/2^{s-2}$ generated by the images of the following matrices modulo $2^{s+1}$
$$
\abcd{-1}{0}{0}{-1} \quad \mbox{and} \quad \xi := \abcd{a}{0}{0}{b}
$$
where we have chosen $a$ such that $a$ has order $2^{s-2}$ in $(\Z/2^s)^{\times}$, generating the second factor in the above decomposition (for instance, we may choose $a:=5$), and $b$ such that
$$
ab\equiv 1 \pod{2^{s+1}} ~.
$$

One then checks that $\xi$ commutes with $\alpha$, $\beta$, $\gamma$ above, and that:
$$
[\xi,\tau] := \xi\tau\xi^{-1}\tau^{-1} = \tau^{a^2-1} \in \langle \tau^2 \rangle ~.
$$

It can then be concluded that $G_0/\langle \xi^2 , \tau^2 , \beta \rangle$ is isomorphic to $(\Z/2)^4$ with the quotient generated by the images of $-1$, $\xi$, $\tau$, $\gamma$ (recall that $\tau^{2^s} = \alpha$). In particular, this quotient is an elementary abelian $2$-group, and so (as $\beta$ is a commutator) we can deduce that $G_0'G_0^2 = \langle \xi^2 , \tau^2 , \beta \rangle$ and that $-1\not\in G_0'G_0^2$.
\smallskip

If $s=2$ the difference with the case $s\ge 3$ is only that $\Gamma_0(2^s) / \Gamma_1(2^s) \cong (\Z/2^s)^{\times}$ is now cyclic of order $2$, generated by $-1$. The conclusions in this case then follow in the same way as in the case $s\ge 3$.
\smallskip

When $s=1$ the groups $G_0$ and $G_1$ coincide so that we have already discussed this case. In particular, we have $-1\in G_0'G_0^2$ when $s=1$.
\end{proof}

\begin{cor}\label{cor_factor_groups} Let $N\in\N$ and write $N = 2^s p_1^{s_1} \cdots p_t^{s_t}$ where the $p_i$ are distinct primes, $s_i\in\N$, and $s\in\Z_{\ge 0}$.
\smallskip

If $G_0$ denotes the group $G_0 := \Gamma_0(N) / \Gamma(2N)$ then
$$
G_0/(G_0'G_0^2) \cong (\Z/2)^{\min\{ 4,s+1\} +t} ~.
$$

We have $-1\in \Gamma_0(N)$, but $-1\in G_0'G_0^2$ if and only if $s\le 1$ and $p_i\equiv 1\pod{4}$ for all $i$.
\smallskip

If $G_1$ denotes the group $G_1 := \Gamma_1(N) / \Gamma(2N)$ then
$$
G_1/(G_1'G_1^2) \cong (\Z/2)^{\min\{ 2,s+1\}} ~.
$$

We have $-1\in \Gamma_1(N)$ if and only if $-1\in G_1'G_1^2$ if and only if $s\le 1$ and $t=0$, i.e., if and only if $N\le 2$.
\smallskip

If $G$ denotes the group $G := \Gamma(N) / \Gamma(2N)$ then
$$
G/(G'G^2) \cong \left\{ \begin{array}{ll} \Z/2 ~,& \mbox{if $s=0$} \\ (\Z/2)^3 ~,& \mbox{if $s\ge 1$} ~.\end{array} \right.
$$

We have $-1\in \Gamma(N)$ if and only if $s\le 1$ and $t=0$, i.e., if and only if $N\le 2$. Furthermore, $-1\in G'G^2$ if and only if $s=0$.
\end{cor}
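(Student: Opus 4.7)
The plan is to reduce the corollary to the prime-power computations of Proposition~\ref{prop_factor_groups} by Chinese Remainder. Writing $2N = 2^{s+1} p_1^{s_1}\cdots p_t^{s_t}$ and using $\SL_2(\Z)/\Gamma(2N) \cong \SL_2(\Z/2N)$, CRT gives
$$
\SL_2(\Z)/\Gamma(2N) \;\cong\; \SL_2(\Z/2^{s+1}) \times \prod_{i=1}^t \SL_2(\Z/p_i^{s_i}).
$$
The congruence conditions on entries modulo $N$ defining $\Gamma_0(N)$, $\Gamma_1(N)$ and $\Gamma(N)$ split componentwise under this isomorphism, so I obtain the direct product decomposition
$$
G_0 \;\cong\; \frac{\Gamma_0(2^s)}{\Gamma(2^{s+1})} \times \prod_{i=1}^t \frac{\Gamma_0(p_i^{s_i})}{\Gamma(p_i^{s_i})}
$$
and the analogous decompositions for $G_1$ and $G$. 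In the $G$ case the odd local factors $\Gamma(p_i^{s_i})/\Gamma(p_i^{s_i})$ are trivial, so $G\cong \Gamma(2^s)/\Gamma(2^{s+1})$ depends only on $s$.

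Next, the elementary identity $(A\times B)'(A\times B)^2 = A'A^2 \times B'B^2$ (clear from $(a,b)^2 = (a^2,b^2)$ and $[(a,b),(a',b')] = ([a,a'],[b,b'])$) iterates to give $G_0/(G_0'G_0^2)$ as the direct product of the corresponding quotients of the local factors, and likewise for $G_1$ and $G$. Feeding in the data of Proposition~\ref{prop_factor_groups}, each odd $p_i$ contributes a single $\Z/2$ in the $\Gamma_0$ case and a trivial group in the $\Gamma_1$ and $\Gamma$ cases; the $2$-part contributes $(\Z/2)^{\min\{4,s+1\}}$, $(\Z/2)^{\min\{2,s+1\}}$, and $\Z/2$ or $(\Z/2)^3$ (depending on whether $s=0$ or $s\ge 1$) respectively. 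Summing the exponents of $\Z/2$ over all factors produces the three stated isomorphism types.

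Finally, I would track $-1$ through the CRT decomposition: its image is $(-1,-1,\dots,-1)$, so it lies in the product $G_0'G_0^2$ iff it does locally at every factor. By Proposition~\ref{prop_factor_groups} this requires $s\le 1$ and $p_i\equiv 1\pod{4}$ for every $i$, giving the criterion for $G_0$. For $G_1$ and $G$, the presence of any odd prime divisor of $N$ already forces $-1\notin\Gamma_1(N)$ and $-1\notin\Gamma(N)$, so those assertions reduce to the $2$-part, and Proposition~\ref{prop_factor_groups}(ii) directly yields the criteria $N\le 2$ (for $-1\in\Gamma_1(N)$, $-1\in G_1'G_1^2$, and $-1\in\Gamma(N)$) and $s=0$ (for $-1\in G'G^2$, understood in the case where $-1\in\Gamma(N)$ so that the element has a meaningful image in $G$). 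The whole argument is essentially CRT bookkeeping; the only small point requiring attention is that the $p_i$-component of $2N$ is $p_i^{s_i}$ rather than $2p_i^{s_i}$, which is precisely why the odd local factors match those computed in Proposition~\ref{prop_factor_groups}(i) rather than something larger.
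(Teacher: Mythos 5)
Your proposal is correct and follows essentially the same route as the paper: the CRT decomposition of $\SL_2(\Z)/\Gamma(2N)$ into local factors, the compatibility of $G\mapsto G'G^2$ with direct products, and then reading off the local data from Proposition~\ref{prop_factor_groups}. The only cosmetic difference is that the paper justifies that $\Gamma_0(N)/\Gamma(2N)$ maps \emph{onto} the product of the local subgroups by comparing orders, whereas you assert the componentwise splitting of the congruence conditions directly; both are fine.
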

\begin{proof} We have a natural isomorphism:
$$
\SL_2(\Z)/\Gamma(2N) \cong \SL_2(\Z/(2N)) \cong \SL_2(\Z/2^{s+1}) \times \SL_2(\Z/p_1^{s_1}) \times \cdots \times \SL_2(\Z/p_t^{s_t})
$$
given in concrete terms as
$$
A \mapsto (A \pmod{2N}) \leftrightarrow ((A \pmod{2^{s+1}}) , (A \pmod{p_1^{s_1}}) , \ldots , (A \pmod{p_t^{s_t}}) )
$$
for matrices $A\in\SL_2(\Z)$ (cf.\ for instance Lemma 4.2.3 of \cite{miyake}).

Under this isomorphism, the subgroup $\Gamma_0(N)/\Gamma(2N) \le \SL_2(\Z)/\Gamma(2N)$ clearly injects into the subgroup
$$
\Gamma_0(2^s)/\Gamma(2^{s+1}) \times \Gamma_0(p_1^{s_1})/\Gamma(p_1^{s_1}) \times \cdots \times \Gamma_0(p_t^{s_t})/\Gamma(p_t^{s_t})
$$
of
\begin{eqnarray*} && \SL_2(\Z)/\Gamma(2^{s+1}) \times \SL_2(\Z)/\Gamma(p_1^{s_1}) \times \cdots \times \SL_2(\Z)/\Gamma(p_t^{s_t}) \\
& \cong & \SL_2(\Z/2^{s+1}) \times \SL_2(\Z/p_1^{s_1}) \times \cdots \times \SL_2(\Z/p_t^{s_t}) ~.
\end{eqnarray*}

That this map is surjective and hence an isomorphism follows for instance by comparison of orders.
\smallskip

We observe that a similar remark holds for the quotients $\Gamma_1(N)/\Gamma(2N)$ and $\Gamma(N)/\Gamma(2N)$, and that these decompositions are obviously compatible with formation of the characteristic subgroups $G'G^2$ etc. Thus the claims of the corollary are seen to follow immediately from Proposition \ref{prop_factor_groups}.
\end{proof}

\section{Proofs of Theorems \ref{thm_lifts_gamma0(N)} and \ref{thm_lifts_gamma1(N)}}\label{proofs} Denote by $\overline{\Gamma}$ either one of the groups $\overline{\Gamma_0(N)}$ and $\overline{\Gamma_1(N)}$. The proofs follow the same general strategy as in \cite{noncongr}: Using information about a presentation of $\overline{\Gamma}$ in terms of generators and relations we obtain via Lemma $4$ of \cite{noncongr} the total number of lifts of $\overline{\Gamma}$ to $\SL_2(\Z)$. Comparing this with the information given by Theorem \ref{thm_congruence_lifts} above we can decide whether all lifts are congruence subgroups.

As far as a presentation of $\overline{\Gamma_0(N)}$ is concerned, the paper \cite{noncongr} cited the results of Chuman in \cite{chuman}. It has since come to our attention that Chuman's paper in fact contains errors, cf.\ \cite{orive}. However, the derivation of the results of \cite{noncongr} did not depend in any way on Chuman's paper.
\smallskip

We shall base our discussion here on Kulkarni's paper \cite{kulkarni}. Let us recall some consequences of the principal results of that paper: First, the paper describes the group $\overline{\Gamma}$ (in fact, any subgroup of finite index in $\PSL_2(\Z)$) by certain combinatorial objects called generalised Farey sequences. We will not describe these here except to say that such a sequence contains certain numbers $a$, $b$, and $r$ of `even', `odd', and `pairs of free' `intervals', respectively, that can be used to display $\overline{\Gamma}$ as given in terms of $a+b+r$ generators, cf.\ Theorem 6.1 of \cite{kulkarni}. These generators are such that $a$ of them have order $2$, $b$ have order $3$, the remaining $r$ are of infinite order, and there are no further relations between the generators. In other words, a presentation of $\overline{\Gamma}$ is given in terms of generators $\bar{g}_1,\ldots,\bar{g}_{a+b+r}$ with the following relations:
$$
\bar{g}_1^2 = \ldots = \bar{g}_a^2 = 1, ~~ \bar{g}_{a+1}^3 = \ldots = \bar{g}_{a+b}^3 = 1.
$$

The numbers $a$, $b$, and $r$ are determined as follows. We have $a=e_2$, $b=e_3$, the number of conjugacy classes of subgroups of $\overline{\Gamma}$ of order $2$ and $3$, respectively, cf.\ sections (7.1), (7.2) of \cite{kulkarni}. Furthermore, the number $r$ is given as
$$
r = \frac{1}{6} (d - 3e_2 -4e_3) + 1,
$$
where $d$ is the index $[\PSL_2(\Z):\overline{\Gamma}]$; cf.\ equation (7.1.2) of \cite{kulkarni}.

\subsection{Proof of Theorem \ref{thm_lifts_gamma0(N)}}\label{gamma_0} Specialising the above to the case $\overline{\Gamma} = \overline{\Gamma_0(N)}$, we have
$$
d = N \prod_{p\mid N} \left( 1 + \frac{1}{p} \right)
$$
and the determination of the numbers $e_2$ and $e_3$ is well-known, cf.\ \cite{miyake}, \S 4.2, for instance.

The cases left undecided by Theorem $2$ of \cite{noncongr} are those where $N$ is $3$, $4$, or $8$ times an odd number greater than $1$ all of whose prime divisors are congruent to $1$ modulo $4$. Thus, we could limit ourselves to discussing these remaining cases.

However, part of the proof of Theorem $2$ of \cite{noncongr} depended on (machine) computations and we want to show here that these can all be avoided. Hence, we will use only the following two results from \cite{noncongr}: First, if $p$ is a prime greater than $3$, but congruent to $3$ modulo $4$ then there exist noncongruence lifts of $\overline{\Gamma_0(p)}$; this is in contrast with the situation for $\overline{\Gamma_0(3)}$ that has precisely $3$ lifts all of which are congruence. Cf.\ Lemma $30$ of \cite{noncongr}. The proof utilized Rademacher's presentation of $\overline{\Gamma_0(p)}$ for $p$ prime as given in \cite{rademacher}.

Secondly, if $4\nmid N$ and all odd prime divisors of $N$ are congruent to $1$ modulo $4$ then all lifts of $\overline{\Gamma_0(N)}$ are congruence, cf.\ part $(i)$ of Theorem 2 of \cite{noncongr}. The proof consists of a simple observation that in this case, any lift necessarily contains $-1$ and hence actually equals $\Gamma_0(N)$.
\smallskip

We will also utilize the following simple observation (cf.\ Lemma $5$ of \cite{noncongr}): 

\begin{lem}\label{lem:subgroups} Suppose that $\Gamma_1$ and $\Gamma_2$ are subgroups of $\SL_2(\Z)$ with $\Gamma_2 \subseteq \Gamma_1$.

If there exists a noncongruence lift of $\overline{\Gamma_1}$ then $\overline{\Gamma_2}$ also has a noncongruence lift.
\end{lem}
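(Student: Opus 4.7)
The plan is to construct a noncongruence lift of $\overline{\Gamma_2}$ explicitly from a given noncongruence lift $\Gamma_1'$ of $\overline{\Gamma_1}$. Writing $\pi \colon \SL_2(\Z) \to \PSL_2(\Z)$ for the canonical projection, the natural candidate is
$$
\Gamma_2' := \Gamma_1' \cap \pi^{-1}(\overline{\Gamma_2}).
$$
The virtue of this choice is that it builds the containment $\Gamma_2' \le \Gamma_1'$ into the definition, and this is exactly what will let the noncongruence property transport from $\Gamma_1'$ downward.

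First I would verify that $\Gamma_2'$ is a lift of $\overline{\Gamma_2}$. The inclusion $\pi(\Gamma_2') \subseteq \overline{\Gamma_2}$ is immediate from the definition. Conversely, for any $\bar g \in \overline{\Gamma_2} \subseteq \overline{\Gamma_1}$, the fact that $\pi(\Gamma_1') = \overline{\Gamma_1}$ produces some $g \in \Gamma_1'$ with $\pi(g) = \bar g$; such a $g$ automatically lies in $\pi^{-1}(\overline{\Gamma_2})$, hence in $\Gamma_2'$. Thus $\pi(\Gamma_2') = \overline{\Gamma_2}$.

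It then remains to show that $\Gamma_2'$ is not a congruence subgroup of $\SL_2(\Z)$. This is where the construction pays off: if we had $\Gamma_2' \supseteq \Gamma(M)$ for some $M\in\N$, then by the very definition of $\Gamma_2'$ we would also have $\Gamma_1' \supseteq \Gamma_2' \supseteq \Gamma(M)$, contradicting the hypothesis that $\Gamma_1'$ is a noncongruence subgroup. Hence $\Gamma_2'$ is the desired noncongruence lift of $\overline{\Gamma_2}$.

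I do not anticipate a genuine obstacle in this argument; the one conceptual point to watch is that intersecting $\Gamma_1'$ with $\pi^{-1}(\overline{\Gamma_2})$, rather than with $\Gamma_2$ itself, is what guarantees surjectivity onto $\overline{\Gamma_2}$, since $\Gamma_2$ might fail to contain $-1$ and so cosets could otherwise be missed.
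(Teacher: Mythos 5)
Your proof is correct and is essentially the paper's own argument: the paper likewise takes the pre-image of $\overline{\Gamma_2}$ inside the given noncongruence lift of $\overline{\Gamma_1}$ (which is exactly your $\Gamma_1' \cap \pi^{-1}(\overline{\Gamma_2})$) and observes that any $\Gamma(M)$ it contained would also lie in $\Gamma_1'$. Your write-up just spells out the surjectivity onto $\overline{\Gamma_2}$ that the paper leaves implicit.
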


The lemma follows since the pre-image of $\overline{\Gamma_2}$ inside a noncongruence lift of $\overline{\Gamma_1}$ must obviously necessarily be a noncongruence subgroup of $\SL_2(\Z)$.
\smallskip

Using Lemma \ref{lem:subgroups} together with the starting points described above, one checks that in order to prove Theorem \ref{thm_lifts_gamma0(N)} it suffices to show:
\begin{itemize} \item If $N \in \{ 4, 8\}$ all lifts of $\overline{\Gamma_0(N)}$ are congruence,
\item If $N \in \{ 6, 9, 16\}$ there are noncongruence lifts of $\overline{\Gamma_0(N)}$,
\item $N$ is $3$ or $4$ times an odd number $>1$ all of whose prime divisors are \\ congruent to $1$ modulo $4$, there are noncongruence lifts of $\overline{\Gamma_0(N)}$.\end{itemize}
\smallskip

When $N=4,6,8,9,16$ one finds $e_2=e_3=0$ and $d=6,12,12,12,24$, respectively, so that $\overline{\Gamma_0(N)}$ is generated in these cases by $r=2,3,3,3,5$ elements with no relations, respectively. Thus, the total number of lifts of $\overline{\Gamma_0(N)}$ not containing $-1$ is $2^2,2^3,2^3,2^3,2^5$, respectively, cf.\ Lemma 4 of \cite{noncongr}. On the other hand, by Theorem \ref{thm_congruence_lifts} the number of congruence lifts of $\overline{\Gamma_0(N)}$ not containing $-1$ is $2^2,2^2,2^3,2,2^3$, respectively for these cases. It follows that when $N\in\{ 4,8\}$ all lifts of $\overline{\Gamma_0(N)}$ are congruence whereas there are noncongruence lifts when $N\in\{ 6,9,16\}$.
\smallskip

Next suppose that $N=4\cdot p_1^{s_1} \cdots p_t^{s_t}$ where $t\ge 1$ and the distinct prime divisors $p_i$ are all congruent to $1$ modulo $4$. In this case, $e_2=e_3=0$, and
$$
d = 6\cdot \prod_{i=1}^t p_i^{s_i-1}(p_i+1)
$$
whereas the number of congruence lifts of $\overline{\Gamma_0(N)}$ not containing $-1$ in this case is $2^{2+t}$, by Theorem \ref{thm_congruence_lifts}. But the total number of lifts not containing $-1$ is $2^{d/6+1}$. Since $d/6+1 \ge 1+6^t$ which is certainly greater than $2+t$ we conclude that there are in fact noncongruence lifts of $\overline{\Gamma_0(N)}$ in this case.
\smallskip

Suppose then that $N=3\cdot p_1^{s_1} \cdots p_t^{s_t}$ where $t\ge 1$ and the distinct prime divisors $p_i$ are all congruent to $1$ modulo $4$, and that, additionally, $p_i\equiv -1 \pod{3}$ for at least one $i$. Then $e_2=e_3=0$,
$$
d = 4\cdot \prod_{i=1}^t p_i^{s_i-1}(p_i+1) ,
$$
and the total number of lifts of $\overline{\Gamma_0(N)}$ not containing $-1$ is $2^{d/6+1}$. As the number of congruence lifts not containing $-1$ is $2^t$ by Theorem \ref{thm_congruence_lifts}, one verifies again that there are noncongruence lifts of $\overline{\Gamma_0(N)}$ in this case.
\smallskip

Suppose then finally that $N=3\cdot p_1^{s_1} \cdots p_t^{s_t}$ where $t\ge 1$ and the distinct prime divisors $p_i$ are all congruent to $1$ modulo $12$. Then $e_2=0$, but $e_3=2^t$. Again,
$$
d = 4\cdot \prod_{i=1}^t p_i^{s_i-1}(p_i+1) ,
$$
but now
$$
r = 1 + \frac{2}{3} \left( -2^t + \prod_{i=1}^t p_i^{s_i-1}(p_i+1) \right) .
$$

Again the number of congruence lifts not containing $-1$ is $2^t$ and the question is whether $2^r > 2^t$. An elementary computation shows this to be the case, and hence the conclusion is again that there are noncongruence lifts of $\overline{\Gamma_0(N)}$ in this case.

\subsection{Proof of Theorem \ref{thm_lifts_gamma1(N)}}\label{gamma_1} If $N\le 3$ then $\overline{\Gamma_1(N)} = \overline{\Gamma_0(N)}$ and hence (by the already proved Theorem \ref{thm_lifts_gamma0(N)}) all lifts of $\overline{\Gamma_1(N)}$ are congruence.

Assume then $N\ge 4$ from now on. One then has $e_2=e_3=0$. Further, the index $d=[\PSL_2(\Z):\overline{\Gamma_1(N)}]$ is
$$
d = \frac{N^2}{2} \prod_{p\mid N} \left( 1 - \frac{1}{p^2} \right) ,
$$
and it follows that $\overline{\Gamma_1(N)}$ is generated by
$$
r = 1 + \frac{N^2}{12} \prod_{p\mid N} \left( 1 - \frac{1}{p^2} \right)
$$
elements with no relations.

Thus, the total number of lifts of $\overline{\Gamma_1(N)}$ is $1+2^r$. The question then becomes whether this number exceeds the number of congruence lifts as given by Theorem \ref{thm_congruence_lifts}.

When $N=4$ we have $r=2$ and so $1+2^r = 5$ which by Theorem \ref{thm_congruence_lifts} is precisely the number of congruence lifts. Hence all lifts are congruence when $N=4$.

Suppose then that $N\ge 5$. Then $N$ is divisible by either $6$, $8$, $9$, or a prime $p\ge 5$.

Now, when $N$ equals $6$, $8$, $9$, or a prime $p\ge 5$, we find that $r$ is $3$, $5$, $7$ or $\frac{p^2+11}{12}$, respectively, and so we see in each case that $1+2^r$ exceeds the number of congruence lifts given by Theorem \ref{thm_congruence_lifts}. Thus there are noncongruence lifts in each of these cases.

By Lemma \ref{lem:subgroups} we can then conclude the existence of noncongruence lifts of $\overline{\Gamma_1(N)}$ whenever $N$ is divisible by $6$, $8$, $9$, or by a prime $p\ge 5$.

Theorem \ref{thm_lifts_gamma1(N)} is proved.


\end{document}